\def\MR#1{\href{http://www.ams.org/mathscinet-getitem?mr=#1}{MR#1}}
\def\E{{\mathcal E}}
\def\R{\mathbb{R}}
\def\bal{\begin{aligned}}
\def\eal{\end{aligned}}
\def\vol{\mathrm{Area}}
\newtheorem{lemma}{Lemma}%[section]
\newtheorem{corollary}[lemma]{Corollary}
\newtheorem{theorem}{Theorem}
\theoremstyle{definition}
\title[Dirichlet Laplacian eigenvalue of regular Polygons]{On the first Dirichlet Laplacian eigenvalue of regular Polygons}
\author{Carlo Nitsch}
\address{Mathematisches Institut ,
Universit\"at zu K\"oln,
D-50931 Cologne, 
Germany}
\address{and}
\address{Dipartimento di Matematica e Applicazioni, Universit\`a di Napoli ``Federico II'', Complesso di Monte S. Angelo, Via Cintia, 80126 Napoli, Italy}
\email{c.nitsch@unina.it}
\numberwithin{equation}{section}
\subjclass[2010]{Primary: 35P15; Secondary: 49R05, 35J25}%Primary:  35J60; Secondary: 52A40.}
\keywords{First Dirichlet Laplacian eigenvalue; Isoperimetric inequality; Shape derivative; Eigenvalues on polygons}
\begin{document}

\begin{abstract}
The Faber-Krahn inequality in $\R^2$ states that among all open bounded sets of given area the disk minimizes the first Dirichlet Laplacian eigenvalue. It was conjectured in \cite{AF} that for all $N\ge 3$ the first Dirichlet Laplacian eigenvalue of the regular $N$-gon is greater than the one of the regular $(N+1)$-gon of same area. This natural idea is suggested by the fact that the shape becomes more and more ``rounded" as $N$ increases and it is supported by clear numerical evidences. Aiming to settle such a conjecture, in this work we investigate possible ways to estimate the difference between eigenvalues of regular $N$-gons and $(N+1)$-gons.
%Moreover we provide an ODE which gives estimates from below for the first Laplace eigenvalues on regular polygons of arbitrary number of sides and we test its efficiency with numerical examples. 
\end{abstract}

\maketitle

\section{Introduction}

Given an open set $\Omega\subseteq \R^2$ with finite measure, the first Dirichlet Laplacian eigenvalue is the least positive number $\lambda$ such that the boundary value problem
\begin{equation}\label{eigenvalue_prob}
\left\{
\begin{array}{ll}
-\Delta u=\lambda\, u\ &\mbox{in }\Omega,\\
u=0\ &\mbox{on } \partial\Omega
\end{array}\right.
\end{equation}
has non trivial solutions in $H^1_0(\Omega)$. The corresponding solutions are called first Dirichlet Laplacian eigenfunctions. If $\Omega$ is connected then eigenfunctions have constant sign and $\lambda$ is simple (eigenfunctions are unique up to a multiplicative factor).

We recall also that, by classical arguments, $\lambda$ can be characterized as
%, Such numbers $\lambda$ are called {\it eigenvalues of the Dirichlet-Laplacian in $\Omega$}, and form a discrete increasing sequence $0<\lambda_1(\Omega)\le\lambda_2(\Omega)\le\lambda_3(\Omega)\dots$, diverging to $+\infty$ (see~\cite{He}, for example). In this paper, we will work with the first two eigenvalues $\lambda_1$ and $\lambda_2$, for which we briefly recall the variational characterization: introducing the {\it Rayleigh quotient} as
\begin{equation}\label{ray}
\lambda=\min\left\{\frac{\|D  u\|^2_{L^2(\Omega)}}{\|u\|^2_{L^2(\Omega)}}:\,u\in H_0^1(\Omega),\, u\not\equiv0 \right\}\,
\end{equation}
and a function minimizes \eqref{ray} if and only if it is a first Dirichlet Laplacian eigenfunction.

In this paper we are mainly concerned with so-called isoperimetric inequalities for $\lambda$. In a broad sense, by isoperimetric inequalities we mean a-priori bounds of $\lambda$ when geometric constraints (such as volume, perimeter, circumradius, etc.) on $\Omega$ are prescribed. The most celebrated inequality in such a class is certainly the Faber-Krahn inequality stating that, among all open sets of $\R^2$ of given measure the disk achieves the least possible eigenvalue \cite{Hen,Ke}. 

In the following, when it is needed to better emphasize the domain dependence of $\lambda$, we will use the notation $\lambda(\Omega)$. 

%For any $v\in H^1(\Omega)$, the quantity $\frac{\|D  v\|^2_{L^2(\Omega)}}{\|v\|^2_{L^2(\Omega)}}$ is called the ``Rayleigh quotient of $v$'' and we denote it by $\mathcal{R}(\Omega,v)$. %The characterization given above of $\lambda$ as minimum of Rayleigh quotient is very important for us since we shall use that, for any given $v\in H^1_0(\Omega)$, it yields $\lambda(\Omega)\le\mathcal{R}(\Omega,v)$.\\
Among the most important properties of the first Dirichlet Laplacian eigenvalue we remind that, by scaling arguments, it holds 
\begin{equation}\label{eq_scaling}
\lambda(\Omega)=t^2\lambda(t\Omega),
\end{equation}
for all real positive $t$. Moreover it is worth mention that, using the variational characterization \eqref{ray}, one can deduce the monotonicity with respect to $\Omega$ in the sense that, whenever $\tilde\Omega\subset\Omega$ are two open sets of finite measure, then
$$\lambda(\Omega)\le\lambda(\tilde\Omega),$$
and the inequality is strict if $\Omega$ is connected.

The last property that we remind is that, if $\Omega$ is connected and symmetric with respect to a rotation or a reflection, the same is true also for the eigenfunctions in view of the simplicity of the eigenvalue.

Making use of \eqref{eq_scaling} the Faber-Krahn inequality reads as

$$\vol(B)\lambda(B)\le\vol(\Omega)\lambda(\Omega)$$

whenever $\Omega$ is open with finite measure, $B$ is a disk, and $\vol(\cdot)$ denotes the measure in $\R^2$.\\

In literature there are many variations on the theme Faber-Krahn, all concerning similar isoperimetric inequalities for the first Dirichlet Laplacian eigenvalue with different or additional constraints. Without claiming to be exhaustive we remind for instance that in \cite{Ma,PW,Po} the author provide upper and lower bounds for convex sets in terms of area and perimeter. The same was done more recently also in \cite{BNT,DP1,DP2,Ni}. Different classical estimates may also include diameter and inradius like in \cite{PS,Pr} while a different approach consists in restricting the class of sets. And indeed from now on we confine our investigation to polygons. Fundamental tone of Dirichlet Laplacian on polygons has been widely investigated for instance in \cite{CK,Fre,GS,So}. Nevertheless  many challenging unsolved questions \cite{AF,Hen} are still unsolved. The most important is due to P\'olya and Szeg\"o \cite{PS} who conjectured that among all $N$-gons of given area the regular one achieves the least possible $\lambda$. The corresponding inequality reads as follows

\begin{equation}\label{fkpoly}\vol(P_N)\ \lambda(P_N)\le\vol(p)\ \lambda(p),\qquad p\in\mathcal{P}_N\end{equation}
where $\mathcal{P}_N$ is the set of all $N$-gons and $P_N\in \mathcal{P}_N$ denotes a regular one.
This conjecture is suggested by the Faber-Krahn inequality in conjunction with the idea that, for a given number of sides, the regular polygon has the most rounded shape. However, in spite of this simple idea, this problem is very challenging and the conjecture has been settled only for $N=3$ and $N=4$ where it is possible to use the Steiner symmetrization \cite{Hen,PS}. 
What is however known (see \cite{Hen}) is that for given $N\ge3$ there exists an $N$-gon which minimizes the product $\vol(p)\ \lambda(p)$ and it is also known that by increasing the number of sides such a minimum decreases, namely: 

$$
\min\{\vol(p)\ \lambda(p):p\in\mathcal{P}_N\}\ge\min\{\vol(p)\ \lambda(p):p\in\mathcal{P}_{N+1}\}.
$$

This of course implies that, the conjectured inequality \eqref{fkpoly} can be true only if it is also true that
$$\vol(P_{N+1})\ \lambda(P_{N+1})\le\vol(P_N)\ \lambda(P_N).$$

Surprisingly enough, to our knowledge, even this inequality is still unproved, as also testified by a recent paper \cite{AF} where, motivated by numerical examples, the authors not only conjectured that along the sequence of regular polygons $\{P_N\}_{N\in\mathbb{N}}$ the product $\vol(P_N)\ \lambda(P_N)$ is decreasing in $N$, but also that the ratio $\displaystyle\frac{\vol(P_{N})\ \lambda(P_N)}{\vol(P_{N+1})\ \lambda(P_{N+1})}$ is decreasing in $N$. \\

From now on by $P_N$ will always denote a regular polygon with $N$ sides and when it is necessary to specify its circumradius $r$ we will use the notation $P_N^r$. \\

Motivated by the lack of analytic estimates which allow to investigate the behavior of $\lambda(P_N)$ for different $N$ in this work we present two possible approach to the problem. The first one is based on the so called dissections which has been used also in \cite{So}; it is a purely geometric technique and gives the following result.
\begin{theorem}\label{teo_geo}
For all $N\ge 3$ and $r>0$ we have 
$\lambda(P_{N+1}^{r})< \lambda(P_N^{r})$.
\end{theorem}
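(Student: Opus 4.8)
The theorem compares regular polygons with the *same circumradius* $r$ (not the same area), so the inscribed-disk/monotonicity observation gives the wrong sign: we cannot simply say $P_{N+1}^r \supset P_N^r$. Indeed the $(N+1)$-gon has *larger* area than the $N$-gon of the same circumradius, so na\"ive domain monotonicity is useless and one must genuinely exploit the geometry. The natural tool announced in the excerpt is a *dissection* argument: I would cut the regular $N$-gon $P_N^r$ into $N$ congruent isoceles triangles by joining the center to the vertices, and likewise cut $P_{N+1}^r$ into $N+1$ congruent isoceles triangles. Each triangle of $P_N^r$ has apex angle $2\pi/N$ and two equal sides of length $r$; each triangle of $P_{N+1}^r$ has apex angle $2\pi/(N+1)<2\pi/N$ and the same two equal side-lengths $r$. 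So the building block triangle of $P_{N+1}$ is "thinner" than that of $P_N$, but has the same two long sides.

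First I would set up a comparison between the single triangles. Let $T_\theta$ denote the isoceles triangle with apex at the origin, apex angle $\theta$, and equal sides of length $r$ (so $T_{2\pi/N}$ is one slice of $P_N^r$). The key monotonicity claim is: the *partial* Rayleigh quotient on a slice, with the eigenfunction of $P_{N+1}^r$ restricted to it, can be compared to the corresponding quantity for $P_N^r$. More precisely, I would use the full eigenfunction $u_{N}$ of $P_N^r$ — which by symmetry is invariant under the rotation by $2\pi/N$ and under the reflections fixing the center — to build a competitor on $P_{N+1}^r$. The cleanest route: radially "open up" each of the $N$ slices of $P_N^r$ into a slice of angle $2\pi/(N+1)$ by an explicit area-preserving-in-the-angular-variable map (in polar coordinates $(\rho,\alpha)\mapsto(\rho, \tfrac{N}{N+1}\alpha)$ on each sector), glue, and compute how the Rayleigh quotient changes. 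Under such an angular compression the $L^2$ norm of $u$ and the radial part of $\|Du\|^2$ scale by the same factor (the Jacobian is $\tfrac{N}{N+1}$, constant), so those contributions cancel in the quotient; only the *angular* part of the Dirichlet integral changes, and it changes by a factor depending on $(N/(N+1))^2$ in the right direction, i.e. it *decreases*. This would give a competitor on $P_{N+1}^r$ (after scaling back up to $N+1$ slices whose total is the whole $(N+1)$-gon) with strictly smaller Rayleigh quotient than $\lambda(P_N^r)$, whence $\lambda(P_{N+1}^r)<\lambda(P_N^r)$ by \eqref{ray}.

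The steps in order would be: (i) record that $u_N$ is rotationally symmetric of order $N$ and reflection-symmetric, so its restriction to one slice $S_N$ of $P_N^r$ determines it and has a well-defined trace that matches across the cuts; (ii) define the angular-compression map $\Phi:S_N\to S_{N+1}$ from a slice of $P_N^r$ to a slice of $P_{N+1}^r$ in polar coordinates, check it is a bi-Lipschitz homeomorphism fixing the two long edges and mapping the base edge to the base edge (so $H^1_0$ boundary conditions are preserved when the slices are reassembled into the full $(N+1)$-gon); (iii) push $u_N$ forward to $v:=u_N\circ\Phi^{-1}$ on $S_{N+1}$, replicate by the rotation of order $N+1$ to get $v\in H^1_0(P_{N+1}^r)$ — continuity across cuts holding because the common trace is the same function of $\rho$ on both sides; (iv) compute $\int|Dv|^2$ and $\int v^2$ by the change of variables, splitting $|Du_N|^2=|\partial_\rho u_N|^2+\rho^{-2}|\partial_\alpha u_N|^2$, and observe the Jacobian and the radial term give an overall common constant in numerator and denominator while the angular term picks up an extra factor $\bigl(\tfrac{N+1}{N}\bigr)^2>1$ in the numerator *only if* one is not careful — so I must get the direction right: compressing the angle shrinks $\rho^{-2}(\partial_\alpha\cdot)^2$ after accounting for the chain rule, and the net effect on the quotient is a strict decrease; (v) conclude via the variational principle, with strictness because equality in \eqref{ray} forces $v$ to be the eigenfunction of $P_{N+1}^r$, which is impossible since $v$ is not smooth across the cuts (it has the "wrong" reflection structure) unless $u_N$ were already harmonic-of-angle, which it is not.

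**Main obstacle.** The delicate point is step (iv): getting the *sign* of the change in the Rayleigh quotient correct. Angular compression $(\rho,\alpha)\mapsto(\rho,c\alpha)$ with $c=N/(N+1)<1$ sends $\partial_\alpha\mapsto c^{-1}\partial_{\alpha'}$ by the chain rule, so $\rho^{-2}|\partial_\alpha u|^2$ in the *new* coordinates becomes $\rho^{-2}c^{-2}|\partial_{\alpha'} v|^2$ — that *increases* the angular Dirichlet energy, which is the wrong direction. One must instead either (a) rethink: when *adding* a slice rather than compressing, the correct comparison is the other way (build a competitor on the $N$-gon from the $(N+1)$-gon's eigenfunction, contradicting minimality of $\lambda(P_N^r)$ among... no, that also needs care); or, more promisingly, (b) use that the *radius of each slice is the fixed number $r$ regardless of $N$*, and compare slices of equal long-side length but different opening angle directly via a Payne–Weinberger / Prékopa-type rearrangement in the angular variable — i.e. show that for the isoceles triangle $T_\theta$ with fixed leg length $r$, the quantity $\lambda$ (with Dirichlet conditions on the base and Neumann on the legs, or the relevant mixed conditions dictated by the symmetry) is *increasing* in $\theta$ on $(0,\pi)$, and then assemble. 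Establishing that monotonicity of $\lambda(T_\theta)$ in the opening angle — presumably itself via a shape-derivative computation or a clean test-function argument on the triangle — is where the real work lies; the dissection and reassembly around it are bookkeeping. The excerpt's mention of the dissection technique of \cite{So} suggests the intended argument may instead overlay the two polygons cleverly and use domain monotonicity on pieces, which would sidestep the sign issue entirely but require an explicit, verified geometric decomposition.
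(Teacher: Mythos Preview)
Your diagnosis is accurate: the angular compression map $(\rho,\alpha)\mapsto(\rho,\tfrac{N}{N+1}\alpha)$ \emph{increases} the angular part of the Dirichlet energy, so step~(iv) fails and you correctly abandon it. But none of the fallback options you sketch is a proof either. Option~(b), monotonicity of the mixed eigenvalue of the slice $T_\theta$ via shape derivative, is essentially the content of the paper's Lemmas~\ref{lem_diff}--\ref{lem_main} and is how Theorems~\ref{teo_ref} and~\ref{teo_best} are proved, not Theorem~\ref{teo_geo}; it is considerably more work than a ``purely geometric'' argument. Your final sentence (``overlay the two polygons cleverly and use domain monotonicity on pieces'') is pointed in the right direction but is not yet an idea.

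The missing idea is this: do not deform the pieces at all --- use only \emph{rigid rotations}. Cut $P_N^r$ into $2N$ pieces rather than $N$: from each of the $N$ isoceles sectors (apex angle $2\pi/N$, legs of length $r$) shave off a thin isoceles triangle $T_i$ with apex at the center and apex angle exactly $\tfrac{2\pi}{N}-\tfrac{2\pi}{N+1}=\tfrac{2\pi}{N(N+1)}$, leaving a quadrilateral $Q_i$. Now rotate each $Q_i$ and each $T_i$ about the center so that the $N$ quadrilaterals occupy $N$ of the $N+1$ sectors of $P_{N+1}^r$ (each $Q_i$ subtends exactly the right central angle $2\pi/(N+1)$), and the $N$ thin triangles together fill the remaining sector (their total apex angle is $N\cdot\tfrac{2\pi}{N(N+1)}=\tfrac{2\pi}{N+1}$). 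The rearranged set $D$ has the same area as $P_N^r$ and is \emph{strictly} contained in $P_{N+1}^r$, since the outer edges of the pieces are chords of the circumcircle inherited from $P_N^r$. Because the map $\Phi$ is a rotation on each piece, pushing the eigenfunction $u$ of $P_N^r$ forward by $\Phi$ preserves both $\|u\|_{L^2}$ and $\|Du\|_{L^2}$ \emph{exactly}; continuity of the transplanted function across the cuts comes from the dihedral symmetry of $u$ (on every radial cut $u$ depends only on the distance to the center). Extending by zero on $P_{N+1}^r\setminus D$ gives an admissible competitor with Rayleigh quotient equal to $\lambda(P_N^r)$, and strict domain monotonicity $\lambda(P_{N+1}^r)<\lambda(D)$ finishes the argument. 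The sign problem you ran into never arises, because nothing is stretched.
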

Even if our result is weaker than \eqref{fkpoly}, to our knowledge it is new in the literature. For fixed inradius the reversed inequality can be found in \cite{So}, where actually the author also proves that among $N$-gons of given inradius the regular one achieves the highest eigenvalue. 
 
In the second part of the paper, using a different approach based on the shape derivative (\cite{Hen,HP,NP,SZ}) we then provide a refinement.
\begin{theorem}\label{teo_ref}
For all $N\ge 3$ and $r>0$ we have
\[
\lambda(P_{N+1}^ r)<\lambda(P^ r_{N})\displaystyle\frac{\cos\displaystyle\frac{\pi}{N}}{\cos\displaystyle\frac{\pi}{N+1}}.
\]
\end{theorem}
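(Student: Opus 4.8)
The plan is to bootstrap from the purely geometric dissection argument of Theorem \ref{teo_geo}, but to keep track quantitatively of the ``room'' gained when passing from $P_N$ to $P_{N+1}$, and then to convert that room into an eigenvalue gain via a shape-derivative (Hadamard-type) computation. The key observation is the scaling relation $\lambda(P_{N+1}^r) = \lambda(P_{N+1}^{\rho})\,(\rho/r)^2$ for any $\rho>0$; so the factor $\cos(\pi/N)/\cos(\pi/(N+1))$ in the statement is exactly the square of a ratio of \emph{inradii} versus \emph{circumradii}. Indeed, a regular $N$-gon with circumradius $r$ has inradius $r\cos(\pi/N)$, so the target inequality is equivalent to
\[
\lambda(P_{N+1}^{\,r}) < \lambda\bigl(P_N^{\,r\sqrt{\cos(\pi/N)/\cos(\pi/(N+1))}}\bigr),
\]
i.e.\ we may rescale $P_N$ up slightly (by the square root of that quotient) and still beat it. Equivalently, after rescaling so that both polygons have the \emph{same inradius}, one wants $\lambda(P_{N+1}) < \lambda(P_N)$ with inradius fixed — but that is false by Solynin's result \cite{So}! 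So the right reading is the intermediate one: the claimed factor interpolates strictly between ``same circumradius'' (Theorem \ref{teo_geo}, true) and ``same inradius'' (false), and the content of Theorem \ref{teo_ref} is that the true inequality survives a definite amount of this rescaling.

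Concretely, here is how I would carry it out. First, fix a regular $(N{+}1)$-gon $Q = P_{N+1}^r$ and place inside it a regular $N$-gon $\widetilde P$ obtained as follows: take the dissection of $Q$ into $N{+}1$ congruent isoceles triangles from the center, regroup them into $N$ groups, and ``fold'' to build an $N$-gon — this is the dissection strategy underlying Theorem \ref{teo_geo}. By the domain monotonicity of $\lambda$ and the fact that the dissection is not measure-increasing, one gets $\lambda(\widetilde P) \ge \lambda(Q)$ with a strict sign. Second — and this is the new quantitative input — I would not stop at ``$\ge$'': instead I would realize $\widetilde P$ as the endpoint of a one-parameter family $P_N^{r(t)}$ of regular $N$-gons, starting from the regular $N$-gon that has the \emph{same area} as $Q$ (or the same inradius, whichever makes the dissection cleanest) and dilating. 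Along this family $t\mapsto \lambda(P_N^{r(t)})$ is explicitly $\propto t^{-2}$, so its derivative is completely under control; the point is to compare the \emph{size} of $\widetilde P$ (measured by its circumradius or inradius) against the size of $Q$ and show the discrepancy is at least the factor $\sqrt{\cos(\pi/N)/\cos(\pi/(N+1))}$. Third, feed this size comparison back through $\lambda(P_N^{r(t)}) = \lambda(P_N^1) t^{-2}$ and the already-known $\lambda(\widetilde P)\ge \lambda(Q)$ to extract the stated strict inequality.

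Alternatively — and this is probably the cleaner route, matching the paper's announced ``shape derivative'' method — I would instead \emph{deform} $P_{N+1}^r$ continuously into $P_N^{r'}$ by moving one vertex at a time (or by a symmetric perturbation of the boundary that merges two adjacent sides), parametrize the deformation by $t\in[0,1]$, and use the Hadamard formula
\[
\frac{d}{dt}\lambda(\Omega_t) = -\int_{\partial\Omega_t} \left|\frac{\partial u_t}{\partial n}\right|^2 V_t\cdot n \,ds,
\]
where $u_t$ is the normalized first eigenfunction and $V_t$ is the deformation velocity. One then bounds $|\partial u_t/\partial n|$ above and below along the moving sides using the symmetry of the regular polygon and standard barrier/comparison arguments for $u_t$, integrates the differential inequality from $t=0$ to $t=1$, and checks that the accumulated change in $\lambda$ exceeds what the factor $\cos(\pi/N)/\cos(\pi/(N+1))$ demands after accounting for the area change via \eqref{eq_scaling}. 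The main obstacle, in either route, is the quantitative control of the normal derivative $\partial u/\partial n$ of the eigenfunction along the boundary of a general (possibly non-regular, transitional) polygon appearing mid-deformation: one needs a lower bound that does not degenerate near the polygon's corners, and this is exactly where the interior reentrant/convex corner singularities of eigenfunctions on polygons make naive estimates fail. I expect the resolution to use the convexity of all the intermediate polygons (so all interior angles stay $<\pi$, giving $u\in C^1$ up to the boundary away from vertices and a controlled modulus at vertices) together with a normalization of $u$ by its $L^\infty$ norm and a Hopf-lemma-type lower bound on $\partial u/\partial n$ on the portion of $\partial\Omega_t$ that is actually moving.
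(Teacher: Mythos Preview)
Your proposal does not reach the result, and the gap is structural rather than technical. Both of your routes stay on the polygon itself: the first tries to squeeze a quantitative gain out of the dissection (but the dissection of Theorem~\ref{teo_geo} only says the rearranged domain sits \emph{strictly} inside $P_{N+1}^r$, and you give no mechanism to show the ``room'' is worth at least the factor $\cos(\pi/N)/\cos(\pi/(N+1))$; there is none, as far as I can see, without further analytic input). The second route---a Hadamard deformation of the full polygon, merging sides---is closer in spirit, but it forces you through \emph{non-regular} intermediate polygons, and you correctly identify the resulting obstacle: controlling $|\partial u/\partial n|$ uniformly near moving corners. You do not resolve it, and resolving it head-on would be substantially harder than the theorem itself.

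The paper sidesteps all of this by a reduction you have missed. Using the dihedral symmetry, $\lambda(P_N^r)$ equals the first eigenvalue $\mu(T(\alpha,r))$ of a \emph{mixed} problem on the right triangle $T(\alpha,r)$ with $\alpha=\pi/N$: Dirichlet on the short leg, Neumann on the hypotenuse and the long leg. Now $\alpha$ is a \emph{continuous} parameter, and the one-parameter family $\alpha\mapsto T(\alpha,r)$ interpolates between the fundamental domains of $P_{N+1}$ and $P_N$ while staying a right triangle throughout---no irregular polygons, no moving Dirichlet boundary, no corner pathology. The Hadamard formula is applied only on the Neumann side (the hypotenuse), where the eigenfunction is smooth up to the boundary, yielding an explicit expression for $\partial\mu/\partial\alpha$ in terms of $v$ and $|Dv|$ along the hypotenuse. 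A one-line comparison with the radial eigenvalue $j_0^2/r^2$ and a monotonicity property of $v$ in the direction perpendicular to the long leg then give the differential inequality
\[
\frac{\partial\mu}{\partial\alpha} \;>\; \mu\,\tan\alpha,
\]
because $j_0^2/(r\cos\alpha)^2>\mu$ by inclusion of $P_N^r$ in its inscribed disk's complement. Integrating this from $\pi/(N+1)$ to $\pi/N$ gives exactly $\log\mu\big|^{\pi/N}_{\pi/(N+1)} > -\log\cos\alpha\big|^{\pi/N}_{\pi/(N+1)}$, which is the claimed factor. So the cosine ratio is not a geometric accident to be matched by a size comparison, as you suggest; it is literally $\exp\int\tan\alpha\,d\alpha$ coming from an ODE on $\mu(\alpha)$.
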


If by $j_0$ we denote the first zero of $J_0$ ($J_0$ denotes the Bessel function of the first kind and order zero \cite{AS}), then the eigenvalue of the disk of radius $r$ is $\frac{j_0^2}{r^2}$ \cite{Ke}. With such a notation, the sharpest result that we present using the shape derivative is the following.

\begin{theorem}\label{teo_best}
For all $N\ge 3$ and $r>0$ we have
\[\displaystyle \ell^r_{N+1}\rho_{N+1}^{r}\lambda(P_{N+1}^{r})<  \ell_{N}^{r}\rho_{N}^{r}\lambda(P_N^{r})-\frac{2\pi j_0^2}{N(N+1)},\]
 where $\ell_N^r$ and $\rho_N^r$ are the side length of $P_N^r$ and the inradius of $P_N^r$ respectively. 
\end{theorem}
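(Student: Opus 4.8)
\emph{Proof sketch.} Since $\ell_N^r\rho_N^r\lambda(P_N^r)$ is scale invariant, we may normalise $r=1$ and write $P_N=P_N^1$. Cut $P_N$ along its $2N$ axes of symmetry into $2N$ congruent right triangles; a model one, $S_\omega$ with $\omega=\pi/N$, has vertices the centre $O$, an edge–midpoint $M$ and an adjacent vertex $V$, with $|OM|=\rho_N=\cos\omega$, $|MV|=\tfrac12\ell_N=\sin\omega$, $|OV|=1$. Because $OM$ and $OV$ lie on symmetry axes, the positive first eigenfunction $u$ of $P_N$ restricted to $S_\omega$ solves the mixed problem with Dirichlet data on $MV$ and Neumann data on $OM\cup OV$, so it is the first eigenfunction of that problem and $\lambda(S_\omega)=\lambda(P_N)$. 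Since $\vol(S_\omega)=\tfrac14\ell_N\rho_N=\tfrac14\sin2\omega$, the assertion is equivalent to $F(\pi/N)-F(\pi/(N+1))>\pi j_0^2/\big(2N(N+1)\big)$, where $F(\omega):=\vol(S_\omega)\lambda(S_\omega)$. For the circular sector $\Sigma_\omega$ of radius $1$ and opening $\omega$, restricting the radial eigenfunction $J_0(j_0|x|)$ of the unit disc gives $\lambda(\Sigma_\omega)=j_0^2$, hence $\vol(\Sigma_\omega)\lambda(\Sigma_\omega)=\tfrac12 j_0^2\,\omega$, whose $\omega$–derivative is $\tfrac12 j_0^2$. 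Thus it suffices to prove $F'(\omega)>\tfrac12 j_0^2$ for every $\omega\in(0,\pi/3]$ and integrate over $[\pi/(N+1),\pi/N]$, an interval of length $\pi/\big(N(N+1)\big)$.

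\emph{Shape derivative.} Realising $\{S_\omega\}$ as a smooth deformation, one has $F'(\omega)=\vol'(S_\omega)\lambda(S_\omega)+\vol(S_\omega)\lambda'(S_\omega)$. In the Hadamard formula for the mixed eigenvalue only the moving parts of $\partial S_\omega$ contribute: the Dirichlet side $MV$ gives $-\int_{MV}(\partial_\nu u)^2\,(V\cdot\nu)\,ds$ and the rotating Neumann side $OV$ gives $\int_{OV}\big(|\nabla u|^2-\lambda u^2\big)(V\cdot\nu)\,ds$, the side $OM$ being frozen; here $V\cdot\nu=-\sin\omega$ on $MV$ and $V\cdot\nu=t$ (arclength from $O$) on $OV$. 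The Rellich–Pohozaev identity with pole at $O$ (note $x\cdot\nu=\cos\omega$ on $MV$, while $x\cdot\nu=0=\partial_\nu u$ on $OM\cup OV$) yields $\int_{MV}(\partial_\nu u)^2\,ds=2\lambda(S_\omega)/\cos\omega$; after substituting, the elementary trigonometric terms cancel and one is left with
\[
F'(\omega)=\tfrac12\,\lambda(S_\omega)+\vol(S_\omega)\,\mathcal I_\omega,\qquad
\mathcal I_\omega:=\int_{OV}\big(|\nabla u|^2-\lambda(S_\omega)\,u^2\big)\,t\,dt,
\]
with $u$ normalised by $\|u\|_{L^2(S_\omega)}=1$ (the mild corner singularities of $u$ at $O,M,V$ are harmless for these integrations by parts).

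\emph{Two estimates.} First, $P_N$ is a proper connected subdomain of the unit disc, so $\lambda(S_\omega)=\lambda(P_N)>j_0^2$ and $\tfrac12\lambda(S_\omega)-\tfrac12 j_0^2=\tfrac12\big(\lambda(S_\omega)-j_0^2\big)>0$. Secondly, $u$ vanishes at the vertex $V$, so $\phi:=u|_{OV}$ satisfies $\phi(1)=0$; since $|\nabla u|^2=\phi'^2$ on the Neumann ray $OV$, the Bessel (weighted Hardy) inequality $\int_0^1 t\,\phi'^2\,dt\ge j_0^2\int_0^1 t\,\phi^2\,dt$ gives $\mathcal I_\omega\ge-\big(\lambda(S_\omega)-j_0^2\big)\int_{OV}u^2\,t\,dt$. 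Inserting both into the displayed formula and cancelling the factor $\lambda(S_\omega)-j_0^2>0$, one sees that $F'(\omega)>\tfrac12 j_0^2$ reduces to the single inequality
\[
\Big(\int_{OV}u^2\,t\,dt\Big)\,\vol(S_\omega)<\tfrac12 .
\]

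\emph{Main difficulty.} This last bound is the heart of the argument and I expect it to be the principal obstacle. It is sharp: for the circular sector $\Sigma_\omega$ one computes (using $\int_0^{j_0}J_0^2 s\,ds=\int_0^{j_0}J_1^2 s\,ds=\tfrac12 j_0^2 J_1(j_0)^2$) that $\big(\int u^2\,t\,dt\big)\vol(\Sigma_\omega)=\tfrac12$ exactly, so any proof must quantify that $S_\omega$ is \emph{strictly} less round than $\Sigma_\omega$. I would estimate the weighted trace $\int_{OV}u^2\,t\,dt$ of the normalised eigenfunction along the ray to a vertex by exploiting that the line through $O$ and $V$ is an axis of symmetry of $P_N$ (so that $u$ extends evenly across it), together with a Pohozaev-type identity or a trace inequality fed by the normalisations $\|u\|_{L^2(S_\omega)}^2=1$ and $\|\nabla u\|_{L^2(S_\omega)}^2=\lambda(S_\omega)$; the few ``fat'' cases ($N=3,4$) can if necessary be disposed of with the explicit eigenfunctions. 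Everything preceding this trace estimate is the routine bookkeeping above.
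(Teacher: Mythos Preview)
Your approach is essentially the paper's: reduce $\lambda(P_N^r)$ to the mixed eigenvalue on the right triangle $T(\alpha,r)=S_\omega$ with $\alpha=\omega=\pi/N$, differentiate in $\omega$ via Hadamard, apply the one--dimensional Bessel inequality $\int_0^1 t\,\phi'^2\ge j_0^2\int_0^1 t\,\phi^2$ to the trace along the hypotenuse $OV$, and integrate from $\pi/(N+1)$ to $\pi/N$. Your displayed identity $F'(\omega)=\tfrac12\lambda(S_\omega)+\vol(S_\omega)\,\mathcal I_\omega$ coincides with the paper's computation; the only difference is cosmetic: the paper chooses the deformation of $T$ that keeps the Dirichlet leg $MV$ on a fixed vertical line (zero normal velocity), so the Dirichlet boundary term never appears and no Rellich--Pohozaev step is needed, whereas you let $MV$ move and then cancel its contribution via Pohozaev. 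Either way one lands on the same formula.

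The genuine gap is exactly the ``main difficulty'' you flag, namely
\[
\Big(\int_{OV}u^2\,t\,dt\Big)\,\vol(S_\omega)<\tfrac12,
\qquad\text{equivalently}\qquad
\int_0^{r} g(s)^2\,s\,ds<\frac{1}{\sin\alpha\cos\alpha}\int_{T} v^2 .
\]
This does \emph{not} require Pohozaev, trace inequalities, or a separate treatment of $N=3,4$; the paper settles it in a few lines by a monotonicity argument. Put $T$ with the Neumann leg $OM$ on the $x$--axis and the Dirichlet leg $MV$ vertical, and set $h:=v_y$. Then $-\Delta h=\mu h$ in $T$ and $h=0$ on $\gamma_1\cup\gamma_3$; if $h>0$ on some open $D\subset T$, then $h\in H^1_0(D)$ would give a Rayleigh quotient equal to $\mu$ on a domain whose admissible class is strictly smaller than that of \eqref{def_mu}, contradicting the variational characterisation of $\mu$. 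Hence $h=v_y<0$ in $T$, so on every vertical fibre $y\mapsto v(x,y)^2$ is strictly decreasing and the hypotenuse value is below the fibre average:
\[
v(x,x\tan\alpha)^2<\frac{1}{x\tan\alpha}\int_0^{x\tan\alpha} v(x,y)^2\,dy\qquad(0<x<r\cos\alpha).
\]
Multiplying by $x/\cos^2\alpha$ and integrating in $x$ gives precisely the trace bound above, with strict inequality. Plugging this into your chain $F'(\omega)\ge\tfrac12\lambda-(\lambda-j_0^2)\vol(S_\omega)\int_{OV}u^2\,t\,dt$ yields $F'(\omega)>\tfrac12 j_0^2$ and finishes the proof. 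So your outline is correct and on the same track as the paper; the step you expected to be the principal obstacle is in fact the easiest, once one notices the sign of $v_y$.
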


Iterating the previous inequality (summing up over all $K>N$) and taking into account that the eigenvalue of $P_K^r$ goes to $j_0^2/r^2$ as $K\to\infty$, we have the following. 

\begin{corollary}
For all $N\ge3$ we rediscover the Faber-Krahn inequality for regular polygons,
\[\displaystyle \lambda(P_{N})> \frac{\pi j_0^2}{\vol(P_N)}.\]
%$$\displaystyle r(P_{N+1}^{r})\rho(P_{N+1}^{r})\lambda(P_{N+1}^{r})\le  r(P_{N}^{r})\rho(P_{N}^{r})\lambda(P_N^{r})-\frac{\pi j_0^2}{n(n+1)}$$
\end{corollary}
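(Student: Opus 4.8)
The plan is to iterate Theorem~\ref{teo_best} and pass to the limit in the number of sides. Writing $a_N:=\ell_N^r\rho_N^r\lambda(P_N^r)$, I would first record the elementary identities $\ell_N^r=2r\sin\frac{\pi}{N}$, $\rho_N^r=r\cos\frac{\pi}{N}$ and $\vol(P_N^r)=\tfrac12\,N\,\ell_N^r\rho_N^r$ (area $=$ half the perimeter times the apothem), so that $a_N=\frac{2}{N}\vol(P_N^r)\lambda(P_N^r)$. In particular $a_N$ is invariant under dilations by \eqref{eq_scaling}, which is why the superscript $r$ can be dropped at the very end. With this notation Theorem~\ref{teo_best} reads simply $a_{N+1}<a_N-\dfrac{2\pi j_0^2}{N(N+1)}$.

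Next I would telescope this one-step estimate. Summing it over $n=N,N+1,\dots,K-1$ and using the partial-fraction identity $\sum_{n=N}^{K-1}\frac{1}{n(n+1)}=\frac1N-\frac1K$, one obtains, for every $K>N$,
\[
a_N>a_K+2\pi j_0^2\left(\frac1N-\frac1K\right).
\]
Now I would let $K\to\infty$. Since $a_K=2r^2\sin\frac{\pi}{K}\cos\frac{\pi}{K}\,\lambda(P_K^r)$ and $\lambda(P_K^r)$ stays bounded — indeed, by the monotonicity of $\lambda$ with respect to inclusion together with the Faber-Krahn equality for disks, $\frac{j_0^2}{r^2}=\lambda(B_r)\le\lambda(P_K^r)\le\lambda\bigl(B_{\rho_K^r}\bigr)=\frac{j_0^2}{(r\cos\pi/K)^2}$, so $\lambda(P_K^r)\to j_0^2/r^2$ — it follows that $a_K\to0$. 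Passing to the limit in the displayed inequality then yields $a_N\ge\dfrac{2\pi j_0^2}{N}$ for all $N\ge 3$.

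Finally I would upgrade this to the strict inequality by a single bootstrap step: feeding the just-proved bound $a_{N+1}\ge\frac{2\pi j_0^2}{N+1}$ back into Theorem~\ref{teo_best} gives
\[
a_N>a_{N+1}+\frac{2\pi j_0^2}{N(N+1)}\ge\frac{2\pi j_0^2}{N+1}+\frac{2\pi j_0^2}{N(N+1)}=\frac{2\pi j_0^2}{N},
\]
and translating back through $a_N=\frac{2}{N}\vol(P_N)\lambda(P_N)$ produces $\vol(P_N)\lambda(P_N)>\pi j_0^2$, i.e.\ the claimed $\lambda(P_N)>\pi j_0^2/\vol(P_N)$. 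There is no genuine analytic obstacle here: all the substance is in Theorem~\ref{teo_best}. The only points requiring a little care are the passage to the limit $K\to\infty$ — this is exactly where the convergence $\lambda(P_K^r)\to j_0^2/r^2$ (or merely its boundedness) is used — and the observation that the telescoping series $\sum_n\frac{1}{n(n+1)}$ collapses to precisely $1/N$, which is what makes the sharp Faber-Krahn constant $\pi j_0^2$ appear exactly.
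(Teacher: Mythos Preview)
Your proof is correct and follows exactly the approach the paper sketches: telescope Theorem~\ref{teo_best}, use the collapse $\sum_{n\ge N}\frac{1}{n(n+1)}=\frac1N$, and pass to the limit via $\lambda(P_K^r)\to j_0^2/r^2$. Your additional bootstrap step to recover the \emph{strict} inequality (which the paper glosses over) is a nice touch and is indeed needed, since the limiting procedure alone only yields $\ge$.
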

Such a result, although not original, emphasizes that Theorem \ref{teo_best} can be also understood as a refinement of the Faber-Krahn inequality on regular polygons.\\
Finally we observe that Theorem \ref{teo_best} can be rewritten in the following way.

\begin{corollary}
For all $N\ge3$ we have
\begin{equation}\label{ultimate}
\displaystyle \vol(P_{N+1})\lambda(P_{N+1})<  \vol(P_{N})\lambda(P_N)+\frac{\vol(P_{N})\lambda(P_N)-{\pi j_0^2}}{N}.
\end{equation}
\end{corollary}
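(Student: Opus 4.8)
The plan is to derive \eqref{ultimate} from Theorem~\ref{teo_best} by a purely elementary manipulation, the only substantive point being to re-express the product $\ell_N^r\rho_N^r$ in terms of the area of $P_N^r$. First I would recall the standard trigonometric identities for a regular $N$-gon of circumradius $r$: the side length is $\ell_N^r = 2r\sin(\pi/N)$, the inradius is $\rho_N^r = r\cos(\pi/N)$, and, decomposing the polygon into $N$ congruent isosceles triangles of base $\ell_N^r$ and height $\rho_N^r$, its area is $\vol(P_N^r) = \tfrac{N}{2}\,\ell_N^r\rho_N^r$. Hence $\ell_N^r\rho_N^r = \tfrac{2}{N}\vol(P_N^r)$, and the same identity holds with $N$ replaced by $N+1$.

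Next I would substitute these two identities into the inequality of Theorem~\ref{teo_best}, which becomes
\[
\frac{2}{N+1}\,\vol(P_{N+1}^r)\,\lambda(P_{N+1}^r) < \frac{2}{N}\,\vol(P_N^r)\,\lambda(P_N^r) - \frac{2\pi j_0^2}{N(N+1)}.
\]
Multiplying through by $(N+1)/2$ and using $\tfrac{N+1}{N} = 1 + \tfrac1N$ then gives
\[
\vol(P_{N+1}^r)\,\lambda(P_{N+1}^r) < \vol(P_N^r)\,\lambda(P_N^r) + \frac{\vol(P_N^r)\,\lambda(P_N^r) - \pi j_0^2}{N}.
\]

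Finally, since $\vol(\cdot)\,\lambda(\cdot)$ is invariant under dilations — immediate from the scaling law \eqref{eq_scaling} together with $\vol(t\Omega)=t^2\vol(\Omega)$ — the quantity $\vol(P_N^r)\,\lambda(P_N^r)$ is independent of $r$, and likewise for the $(N+1)$-gon; so the superscript $r$ may be dropped throughout, yielding \eqref{ultimate}. I do not expect any genuine obstacle here: the proof is bookkeeping with the elementary polygon formulas plus the remark that $\vol\,\lambda$ is scale-free, which is precisely what makes the statement for a single fixed $r$ equivalent to the statement for all $r$.
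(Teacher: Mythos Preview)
Your proposal is correct and is precisely the ``rewriting'' the paper alludes to: the identity $\ell_N^r\rho_N^r=\tfrac{2}{N}\vol(P_N^r)$ turns Theorem~\ref{teo_best} into \eqref{ultimate} after multiplying by $(N+1)/2$, and the scale invariance of $\vol(\cdot)\lambda(\cdot)$ justifies dropping the circumradius. There is nothing to add.
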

Unfortunately we are unable to prove or disprove the conjectured inequality \eqref{fkpoly}, since the reminder term $\displaystyle\frac{\vol(P_{N})\lambda(P_N)-{\pi j_0^2}}{N}$ in \eqref{ultimate} is positive. However we make a step forward to its proof, and we provide two different point of view and possibly two useful approaches to the problem.

\section{Proof of Theorem \ref{teo_geo}}

The proof we propose is completely based on the geometric construction of a particular test function.
For simplicity we start by considering a square $P^r_4$ and a regular pentagon $P^r_5$ having the same circumradius $r$.
The square is split into eight polygons (Figure \ref{fig_pentagono}(a)). Four of them, those denoted by $T_i$ ($i=1,...,4$) and represented in grey, are congruent open isosceles triangles. The other four ($Q_i$ with $i=1,...,4$) are congruent open convex quadrilaterals. The four isosceles triangles have one vertex in common which also coincides with the center of the square. The angle at this vertex is equal to ${\pi}/{10}$ which is exactly the difference of the central angle of the square ($\pi/2$) and the pentagon ($2\pi/5$). Here by central angle we mean the angle made at the center of the polygon by any two adjacent vertices of the polygon. \\
Now we can rearrange this eight pieces (and the eight cutting segments), simply by rotating them around the center (see Figure \ref{fig_pentagono}(b)) to form a new irregular open polygon $D$ having the same area as the square. The polygon $D$ is strictly included into a regular pentagon which, by construction, has the same circumradius of the square. This kind of geometric construction is also sometimes called dissection.

For what concerns our purposes, such a dissection can be naturally translated into a bijection $\Phi: P^r_4\to D\subset P^r_5$ with the only important requirement that $\Phi$ must act on each $T_i$ ($i=1,...,4$) and each $Q_i$ ($i=1,...,4$) as a rotation. The map $\Phi$ has then the following interesting properties.
% which is quite complicated to describe analytically but it is represented in Figure \ref{fig_pentagono}. 
%also called dissection, 
First of all, if $u$ is a first eigenfunction on $P^r_4$, then the function
$$v(x)=\left[\begin{array}{ll}u(\Phi^{-1}(x))& \mbox{ if } x\in D\\
 0 &\mbox{ otherwise}\end{array}\right.$$
is continuous on $P^r_5$. This is true because, in view of the symmetry of $u$ with respect to reflection across the axes of the four sides and across diagonals (the so-called dihedral group $\mathcal {D}_4$), the function $u$ takes the same value on all points of the cutting segment having the same distance to the center of $P_4^r$. Moreover $v$ belongs to $H^1_0(P^r_5)$ since it belongs to $H^1(T_i)$ and to $H^1(Q_i)$ (for $i=1,...,4$).

What more, %we have that $\mathcal{R}(P^r_4,u)=\mathcal{R}(P^r_5,v)$. In fact 
by construction 
$$\int_{P^r_4}u^2=\int_{D}v^2$$
and
$$\int_{P^r_4}|Du|^2=\int_{D}|Dv|^2.$$

Therefore we have that $$\displaystyle\lambda(P^r_5)<\lambda(D)\le\frac{\displaystyle\int_{D}|Dv|^2}{\displaystyle\int_{D}v^2}=\frac{\displaystyle\int_{P^r_4}|Du|^2}{\displaystyle\int_{P^r_4}u^2}=\lambda(P^r_4)$$ which completes the proof of Theorem \ref{teo_geo} for $N=4$. 

However, the very same construction can be applied to any couple of consecutive regular polygons $P^r_N$ and $P^r_{N+1}$ with the same circumradius $r$. In this case we can construct a dissection which splits $P^r_N$ into $2N$ pieces. Again $N$ of them are congruent isosceles triangle sharing the center of $P^r_N$ as one vertex. The angle that these isosceles triangles have in the center is now equal to $\frac{2\pi}{N(N+1)}$, which is exactly the difference between the central angles of $P^r_N$ and $P^r_{N+1}$. Then the rest of the proof can continue exactly as above taking advantage of the symmetry of eigenfunctions with respect to the dihedral group of rotations and reflections $\mathcal{D}_N$.

%\begin{figure}
%\includegraphics[width=\textwidth]{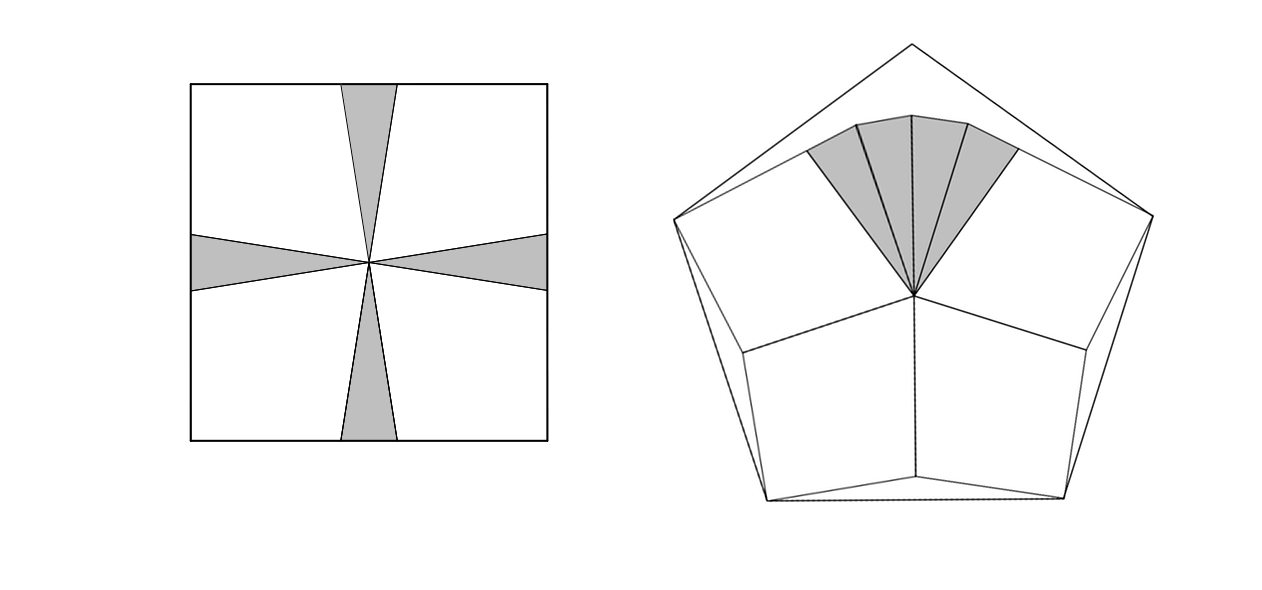}
%\end{figure}

\begin{figure}[!ht]
\centering
\begin{picture}(200,200)% width and height of the picture
\put(-130,-20){\includegraphics[width=1.00\textwidth]{}}
\put(-90,135){(a)}
\put(-30,55){$Q_1$}
\put(30,55){$Q_2$}
\put(30,135){$Q_3$}
\put(-30,135){$Q_4$}
\put(0,40){$T_1$}
\put(50,95){$T_2$}
\put(0,150){$T_3$}
\put(-50,95){$T_4$}
%%%%%
\put(110,135){(b)}
\put(150,105){$Q_1$}
\put(160,35){$Q_2$}
\put(230,35){$Q_3$}
\put(240,105){$Q_4$}
\put(170,130){$T_1$}
\put(185,140){$T_2$}
\put(203,140){$T_3$}
\put(218,130){$T_4$}
\end{picture}
        \caption{Rearranging the square into a regular pentagon}
        \label{fig_pentagono}
\end{figure}

\section{Proof of Theorem \ref{teo_best} and Theorem \ref{teo_ref}}

%Theorem ?? is a refinement of Theorem ?? and requires a completely different argument.
For the reader convenience we split the proof into several lemmata. First we observe that the study of the first Dirichlet Laplacian eigenvalue problem on a regular polygon goes along with the study of a mixed boundary eigenvalue problem on right triangles (see also \cite{AC} for Laplacian eigenvalues with mixed boundary conditions).

\begin{figure}
%\centering
\def\svgwidth{\textwidth}
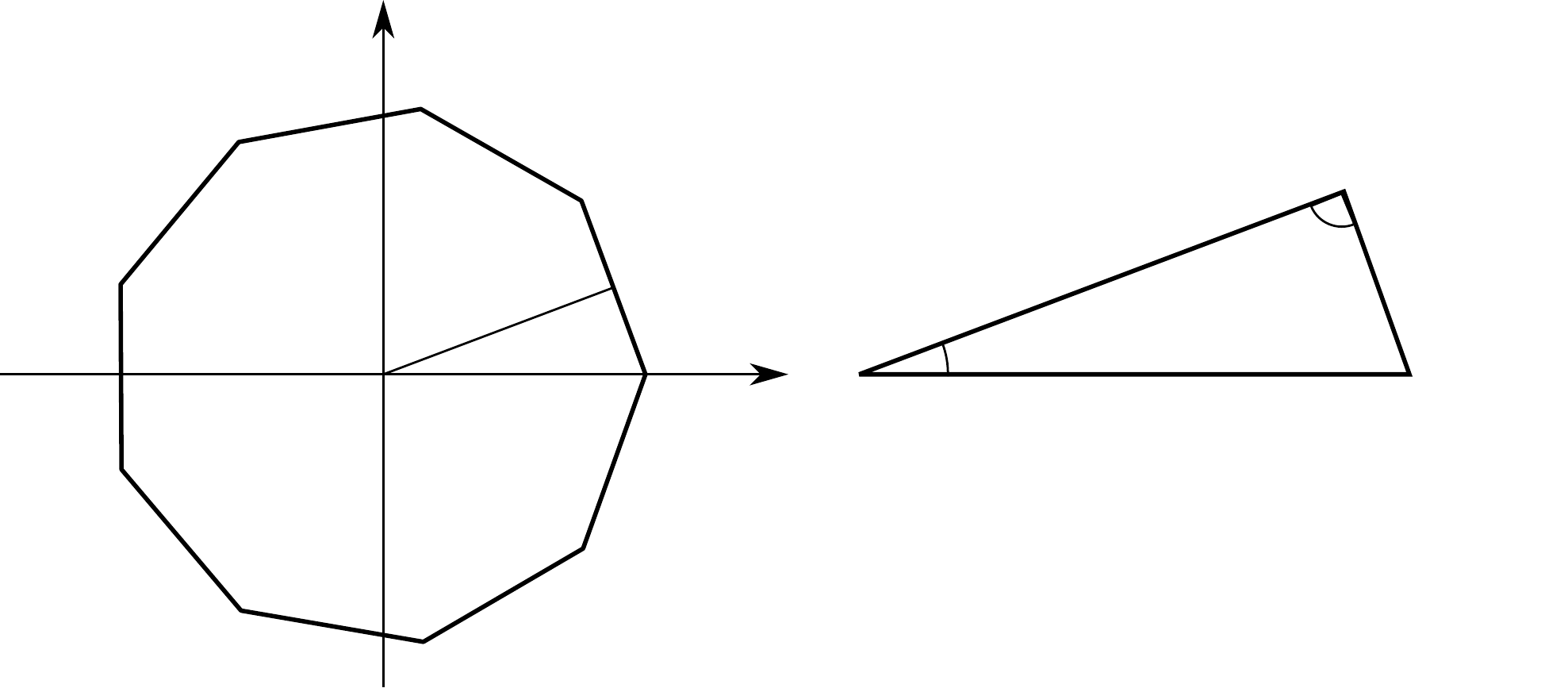\caption{The polygon $P_N^r$ and the right triangle $T$.}\label{PT}
\end{figure}

Let $T=T(\alpha, r)$ be a right open triangle %(see Figure \ref{Ta}) 
with hypotenuse of length $ r$ and one of the acute angle measuring $\alpha$. 
Let us define $\gamma_1$ the catethus opposite to the angle whose measure is $\alpha$, and let $\gamma_2$ and $\gamma_3$ be the hypothenuse and the other cathetus respectively.\\
We define $\mu(T)$ to be the least positive number such that there exists a nontrivial solution to the following problem 
\begin{equation}\label{mixed}
\left\{
\begin{array}{ll}
-\Delta v=\mu(T)\, v\ &\mbox{in }T,\\
v=0\ &\mbox{on } \gamma_1,\\
\frac{\partial v}{\partial \nu}=0\ &\mbox{on } \gamma_2\cup\gamma_3.
\end{array}\right.
\end{equation}
Here $\nu$ is the unit exterior normal to $\partial T$. As for the first eigenfunctions of the Dirichlet eigenvalue problem, using similar classical arguments, it is possible to prove that any solution $v$ has constant sign in $T$.\\
By classical arguments it is easy to see that the first eigenvalue $\mu(T)$ can be also characterized by the variational formulation
\begin{equation}\label{def_mu}
\mu(T)=\min\left\{\frac{\|D v\|^2_{L^2(T)}}{\|v\|^2_{L^2(T)}}:\,v\in H^1(T),\, v\not\equiv0,\, v=0 \;\text{on $\gamma_1$} \right\}\,.
\end{equation}
Moreover $w$ is a minimizer of \eqref{def_mu} if and only if it is a solution to problem \eqref{mixed}. The following lemma holds.

\begin{lemma}
For all $r>0$ and $N\ge 3$, if $\alpha=\pi/N$, then $\lambda(P^r_N)=\mu(T(\alpha, r))$.
% and, if $u$ and $w$ are %first Dirichlet Laplacian eigenfunction on $P_N$ and 
%solution to \eqref{eigenvalue_prob} and \eqref{mixed} respectively, then they are proportional to each other on $T$.
\end{lemma}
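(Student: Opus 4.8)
The plan is to realize the right triangle $T=T(\pi/N,r)$ as a fundamental domain for the symmetry group of $P_N^r$ and then to pass back and forth between the two variational problems \eqref{ray} and \eqref{def_mu}. First I would set up the dissection precisely: cutting $P_N^r$ along the $N$ segments joining the center $O$ to the vertices and the $N$ segments joining $O$ to the edge midpoints produces $2N$ pairwise congruent open right triangles, each isometric to $T(\pi/N,r)$. Indeed, in such a triangle the angle at $O$ is half the central angle, namely $\pi/N$; the angle at the edge midpoint is $\pi/2$, because the apothem is orthogonal to the side; hence the hypotenuse is the segment from $O$ to a vertex, whose length is the circumradius $r$; the side $\gamma_1$ opposite to the angle $\pi/N$ is the half-side lying on $\partial P_N^r$, whereas $\gamma_2$ (the hypotenuse) lies on the diagonal through a vertex and $\gamma_3$ (the remaining cathetus) lies on the apothem line through an edge midpoint, both of them symmetry axes of $P_N^r$. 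Equivalently, the $2N$ triangles are the images of one fixed copy of $T$ under the dihedral group $\mathcal{D}_N$, which is generated by the two reflections across the lines containing $\gamma_2$ and $\gamma_3$; these two lines meet at $O$ and form the angle $\pi/N$.

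Next I would prove the inequality $\mu(T)\le\lambda(P_N^r)$. Let $u$ be a first Dirichlet eigenfunction of $P_N^r$; since $\lambda(P_N^r)$ is simple, $u$ is invariant under $\mathcal{D}_N$. Its restriction $v:=u|_T$ belongs to $H^1(T)$ and vanishes on $\gamma_1\subset\partial P_N^r$, so $v$ is admissible in \eqref{def_mu}. By the invariance of $u$,
\[
\int_{P_N^r}|Du|^2=2N\int_T|Dv|^2,\qquad \int_{P_N^r}u^2=2N\int_T v^2,
\]
so the Rayleigh quotient of $v$ equals $\lambda(P_N^r)$, and therefore $\mu(T)\le\lambda(P_N^r)$.

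For the reverse inequality $\lambda(P_N^r)\le\mu(T)$, let $w$ be a minimizer of \eqref{def_mu} on $T$, and define $U$ on $P_N^r$ by extending $w$ through successive even reflections across the lines containing $\gamma_2$ and $\gamma_3$. Since these reflections generate $\mathcal{D}_N$ and $T$ is a fundamental domain tiling $P_N^r$ with $2N$ copies, the function $U$ is well defined and invariant under $\mathcal{D}_N$; because the even reflection of an $H^1$ function across a line is again of class $H^1$ (the two one-sided traces on the line coincide, so no singular derivative is created there), we get $U\in H^1(P_N^r)$, and since $w=0$ on $\gamma_1$ while the orbit of $\gamma_1$ under $\mathcal{D}_N$ is all of $\partial P_N^r$, in fact $U\in H^1_0(P_N^r)$. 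Once more by symmetry the Rayleigh quotient of $U$ equals $\mu(T)$, so \eqref{ray} gives $\lambda(P_N^r)\le\mu(T)$. Combining the two inequalities yields $\lambda(P_N^r)=\mu(T(\pi/N,r))$.

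The only step that needs genuine care is the gluing of $H^1$ functions across the cutting segments used in the last two paragraphs. For the restriction direction this is harmless, since the cuts are Lebesgue-null and $v$ inherits $v=0$ on $\gamma_1$ directly. The delicate point is that the reflected extension $U$ must carry no singular part of the gradient along the cuts; this is exactly the classical fact that the even extension of an $H^1(T)$ function across a side of $T$ lies in $H^1$, and no regularity of $w$ beyond membership in $H^1$ is invoked, which is precisely why the argument establishes the equality of the two eigenvalues without discussing the mixed boundary conditions pointwise. If in addition one wishes to match eigenfunctions rather than merely eigenvalues, then on the cuts one further uses the Neumann condition $\partial w/\partial\nu=0$ on $\gamma_2\cup\gamma_3$, respectively the reflection symmetry of $u$, to check that the extended, respectively restricted, function is a weak solution of the relevant equation; this is not needed for the statement of the lemma.
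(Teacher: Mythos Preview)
Your proof is correct and follows essentially the same approach as the paper: identify $T(\pi/N,r)$ as a fundamental domain for the dihedral action on $P_N^r$, use the restriction of the (simple, hence $\mathcal{D}_N$-invariant) Dirichlet eigenfunction as a test function for $\mu(T)$, and conversely extend a minimizer of \eqref{def_mu} by even reflections to a test function in $H^1_0(P_N^r)$. Your write-up is in fact more careful than the paper's about the $H^1$ gluing across the cuts and about why the extended function vanishes on $\partial P_N^r$.
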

\begin{proof}
The proof is elementary and based on the symmetry of regular polygons. 
A regular polygon with $N$ sides has 2N different symmetries: $N$ rotational symmetries and $N$ reflection symmetries (forming the so-called dihedral group). \\ In a reference frame (like the one in Figure \ref{PT}) in which the origin is the center of $P_N$ and one of the vertices is on the positive $x$-semiaxis, for $(k=1,...,N)$ the rotations $\mathcal{S}^1_k$ and reflections $\mathcal{S}^2_k$ have the following matrix representation:

\[
\mathcal{S}^1_k=\left(
\begin{array}{cc}
\cos \frac{2\pi k}{N}& -\sin\frac{2\pi k}{N}\\
\sin\frac{2\pi k}{N}&\cos\frac{2\pi k}{N}
\end{array}
\right), \qquad
\mathcal{S}^2_k=\left(
\begin{array}{cc}
\cos \frac{2\pi k}{N}& \sin\frac{2\pi k}{N}\\
\sin\frac{2\pi k}{N}&-\cos\frac{2\pi k}{N}
\end{array}
\right).
\]
\\

Let $P^r_N$ be a regular polygon with center $O$. We can draw, inside $P^r_N$, a triangle which we identify with $T(\pi/N,r)$ by considering the following three vertices (see Figure \ref{PT}): 
\begin{itemize}
\item[(i)] the center $O$, 
\item[(ii)] the midpoint of one of the sides of $P_N$,
\item[(iii)] one of the corners of $P_N$ adjacent to (ii). 
\end{itemize}
We check that by construction the angle corresponding to the vertex in $O$ is equal to $\pi/N$ and moreover the length of the hypothenuse is $r$. %Therefore such triangle is nothing but $T(\pi/N,r)$. %The opposite cathetus $\gamma_1$ coincides with half the side of $P_N$, and we denote by $\gamma_2$ and $\gamma_3$ the hypothenuse and the other cathetus respectively.

Now we consider a function $u$ solution to \eqref{eigenvalue_prob} on $P_N^r$ and a function $w$ solution to \eqref{mixed} on $T(\pi/N,r)$.
Since the function $u$ is invariant under the action of the symmetry group of $P^r_N$, we have $\frac{\|D u\|^2_{L^2(T)}}{\|u\|^2_{L^2(T)}}=\frac{\|D u\|^2_{L^2(P^r_N)}}{\|u\|^2_{L^2(P^r_N)}}$, which together with $u\in\left\{v\in H^1(T),\, v\not\equiv0,\, v=0 \;\text{on $\gamma_1$} \right\}$ yields $\mu(T)\le\lambda(P^r_N)$. \\
On the other hand, % since the polygon $P_N$ is made of $N$ rotated copies of the triangle $T$ plus $N$ of its reflected copies, it is possible to glue together $N$ rotated copies of the function $w$ and $N$ of its reflected copies, in order to build a new function $\tilde w\in H_0^1(P_N)$.
every point $x$ in $P^r_N$ is image of a unique point $y$ of $T$ through some of the elements of the group of symmetries of $P_N^r$, namely $x=\mathcal{S}^i_k y$ for some $(i=1,2 \mbox{ and } k=1,...,N)$.
Then we set $\tilde w(x)=w(y)$. %(please observe that the point $x$ can be image through more than one symmetry but the pre-image $y$ is always unique and therefore $w$ is well-defined). 
By construction $w=\tilde w$ on $T$ and $\frac{\|D \tilde w\|^2_{L^2(T)}}{\|\tilde w\|^2_{L^2(T)}}=\frac{\|D \tilde w\|^2_{L^2(P^r_N)}}{\|\tilde w\|^2_{L^2(P^r_N)}}$, therefore implying $\mu(T)\ge\lambda(P^r_N)$.
\end{proof}

Now that we have proved the equivalence between problem \eqref{eigenvalue_prob} on $P^r_N( r)$ and problem \eqref{mixed} on $T(\pi/N, r)$ we observe that $\mu(T(\alpha, r))$ is defined as a function of the parameter $\alpha$ for all $\alpha\in(0,\pi/2)$, opening the possibility, in what follows, to investigate the dependence of $\mu$ with respect to $\alpha$ by way of differentiation. 

From now on, when there is no confusion, we write for simplicity $\mu$ instead of $\mu(T(\alpha, r))$. Moreover we choose a reference frame in which (see Figure \ref{T}) the triangle $T(\alpha,r)$ lies in the first quadrant, the corner of the angle measuring $\alpha$ coincides with the origin $O$ and the cathetus $\gamma_3$ and $\gamma_1$ are parallel to $x$ and $y$ axis respectively.   
\begin{lemma}\label{lem_diff}

\begin{figure}
%\centering
\def\svgwidth{.6\textwidth}
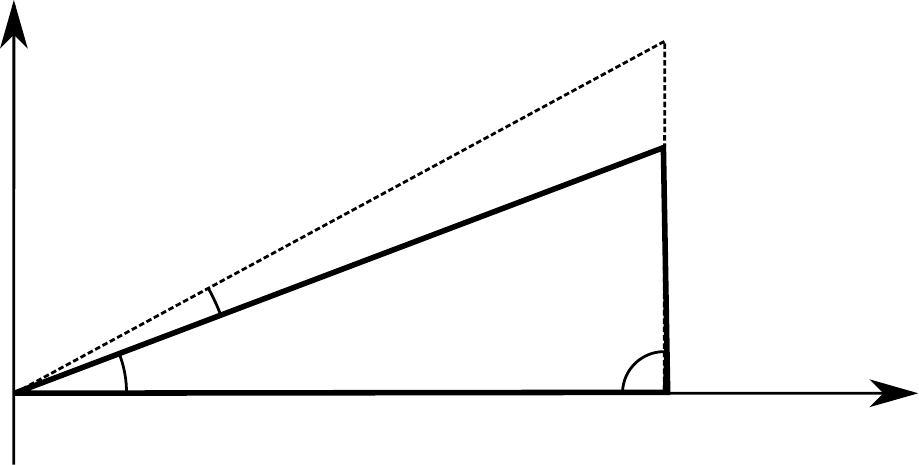\caption{The right triangle $T(\alpha,r)$ and its deformation with respect to $t$.}\label{T}
\end{figure}

For any given positive $r$, and for all $\alpha\in(0,\pi/2)$, if $v$ is a solution to \eqref{mixed} then we have
\[
\frac{\partial \mu}{\partial \alpha}=\frac{\displaystyle\int_{0}^{ r} \Big(|Dv(s \sin\alpha,s \cos\alpha)|^2-\mu v^2(s \sin\alpha,s \cos\alpha)\Big) s\ ds}{\displaystyle\int_{T} v^2}+2\mu\tan\alpha.
\]
\end{lemma} 
\begin{proof}
For all $t>0$ sufficiently small we can consider the triangle $T\left(\alpha+t,r\frac{\cos\alpha}{\cos(\alpha+t)}\right)$ (see Figure \ref{T} the dashed line). 
In such a way we define a one parameter family of domains which are perturbations of $T(\alpha,r)$. There exists in particular a smooth vector field $V$ (not uniquely defined everywhere) such that for all positive $t$ sufficiently small $T\left(\alpha+t,r\frac{\cos\alpha}{\cos(\alpha+t)}\right)=(\mathbb{I}+tV) T(\alpha,r)$, $\mathbb{I}$ being the identity. This is all what we need to use the Hadamard formula (see \cite{Hen}) to get
\begin{equation}\label{shapediff}
\left.\frac{d} {d t}\left[\mu\left(T\left(\alpha+t,r\frac{\cos\alpha}{\cos(\alpha+t)}\right)\right)\right]\right|_{t=0}=\frac{\displaystyle\int_{\gamma_2} \Big(|Dv|^2-\mu v^2\Big)V\cdot\nu\ d\sigma}{\displaystyle\int_{T} v^2}.
\end{equation}

Actually such a formula has been implemented to differentiate Neumann Laplacian eigenvalues with respect to domain variations. We are not dealing with a Neumann eigenvalue, nevertheless it is still possible to use the very same formula in our case since we are applying a deformation affecting only the Neumann part of the boundary. Moreover it is also easy to see that if we glue together $v$ and the reflection of $-v$ across the cathetus $\gamma_1$, we get a Neumann eigenfunction for $T\cup \tilde T$, where $\tilde T$ is the reflection of $T$ across $\gamma_1$. 

Then we observe that from \eqref{eq_scaling} we have $\frac{d}{dr}(r^2\mu(T(\alpha,r)))=0$ yielding
\[
\left.\frac{d} {d t}\left[\mu\left(T\left(\alpha+t,r\frac{\cos\alpha}{\cos(\alpha+t)}\right)\right)\right]\right|_{t=0}=\frac{\partial\mu}{\partial \alpha}-\frac{\partial\mu}{\partial r} r\tan\alpha=\frac{\partial\mu}{\partial \alpha}-2\mu\tan\alpha.
\]

Now we go back to the righthand side of \eqref{shapediff} and we parametrize the hypothenuse $\gamma_2$. We set $$x(s)=s\cos\alpha\quad \mbox{and}\quad y(s)=s\sin\alpha\quad \mbox{for $s\in[0,r]$}.$$ We have $d\sigma=ds$ and we observe that $\nu=(-\sin\alpha,\cos\alpha)$ on $\gamma_2$. %using the parameter $s=\sqrt{x^2+y^2}$ su that 
Eventually we conclude the proof observing that, no matter which explicit representation of $V$ we choose, necessarily by construction we must have $V(x(s),y(s))=\left(0,\dfrac{s}{\cos\alpha}\right)$.
\end{proof}

\begin{lemma}\label{concavity}
Let $v$ be a solution to \eqref{mixed}, then we have for a.e. $x\in(0,r\cos\alpha)$
\[
v^2(x,x\tan\alpha)<\displaystyle\frac{1}{x\tan\alpha}\int_{0}^{ x\tan\alpha} v^2(x,y)\ dy.
\]
\end{lemma}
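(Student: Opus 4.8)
The plan is to derive the estimate from a one-variable concavity fact, once monotonicity of $v$ along vertical segments is available. Since every solution of \eqref{mixed} is sign-definite, we may assume $v>0$ in $T$. Fix $x\in(0,r\cos\alpha)$, put $Y=x\tan\alpha$ and $F(y)=\int_0^{y}v^2(x,t)\,dt$, so that $F(0)=0$ and $F'(y)=v^2(x,y)$. If $y\mapsto v^2(x,y)$ is non-increasing on $[0,Y]$ then $F$ is concave, hence $F(Y)/Y\ge F'(Y)$, that is $\frac1Y\int_0^{Y}v^2(x,y)\,dy\ge v^2(x,Y)$, with strict inequality unless $F$ is affine on $[0,Y]$, i.e.\ unless $v(x,\cdot)$ is constant there. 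Thus it is enough to show that, for a.e.\ $x$, the profile $v(x,\cdot)$ is non-increasing and non-constant on $[0,x\tan\alpha]$; equivalently, that $v_y:=\partial v/\partial y\le 0$ in $T$ while $v_y\not\equiv 0$ on almost every vertical fiber.

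To obtain the monotonicity I would symmetrize. Reflect $T$ evenly across the cathetus $\gamma_3$ (which lies on the $x$-axis and carries a Neumann condition): the even extension $\hat v$ of $v$ lies in $H^1(\hat T)$, is positive, vanishes on the Dirichlet side, and solves a mixed problem on the isosceles triangle $\hat T=\{(x,y):0<x<r\cos\alpha,\ |y|<x\tan\alpha\}$, with Neumann conditions on the two equal sides and a Dirichlet condition on the vertical base. By the same sign-definiteness argument the first eigenvalue of this mixed problem is simple, and its eigenfunction is even in $y$ since $\hat T$ is symmetric about $\{y=0\}$; comparing Rayleigh quotients---using $\hat v$ as a competitor on $\hat T$, and the restriction to $T$ of the even first eigenfunction of $\hat T$ as a competitor for $\mu$---shows that this first eigenvalue equals $\mu$ and that $\hat v$ is, up to a positive scalar, the unique minimizer of $\|Dw\|^2_{L^2(\hat T)}/\|w\|^2_{L^2(\hat T)}$ among $w\in H^1(\hat T)$ vanishing on the base.

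Next I would apply Steiner symmetrization in the $y$-direction. Since $\hat T$ is symmetric about $\{y=0\}$ and each of its vertical fibers is already a centered interval, this operation leaves $\hat T$ unchanged and replaces $\hat v$ by its fiberwise symmetric-decreasing rearrangement $\hat v^{*}$; it preserves $\|\hat v^{*}\|_{L^2(\hat T)}$, does not increase $\|D\hat v^{*}\|_{L^2(\hat T)}$, and preserves the vanishing on the base (because $\hat v$ is $0$ on the whole fiber over $\{x=r\cos\alpha\}$). Hence $\hat v^{*}$ is again a minimizer, so $\hat v^{*}=\hat v$ by uniqueness; in particular $v(x,\cdot)$ is non-increasing on $[0,x\tan\alpha]$ for a.e.\ $x$. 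Finally, the set of $x$ on which this profile is constant is Lebesgue-negligible: otherwise $v_y$ would vanish on a set of positive planar measure and hence, by real-analyticity of eigenfunctions inside $T$, identically in $T$; but then $v=v(x)$, so the Neumann condition on the non-horizontal hypotenuse $\gamma_2$ forces $v'\equiv 0$, hence $v$ constant and, since $\mu>0$, $v\equiv 0$---a contradiction. Combined with the first paragraph, this yields the strict inequality for a.e.\ $x$.

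The delicate step---the one I would scrutinize most---is the symmetrization: one must make sure that the P\'olya--Szeg\"o inequality under Steiner symmetrization still holds in this mixed Dirichlet--Neumann situation, and that the even extension $\hat v$ really is the ground state on $\hat T$ (so that the minimizer is unique up to a scalar and the identity $\hat v^{*}=\hat v$ follows). Both points go through: for the $y$-component of the gradient the energy inequality is the elementary one-dimensional fiberwise statement, while for the $x$-component it is the classical P\'olya--Szeg\"o/Steiner inequality, applicable here because $\hat T$ has interval fibers and is fixed by the symmetrization---the Neumann portion of $\partial\hat T$ plays no role, the rearrangement only ``sees'' the super-level sets of $\hat v$ together with the full fiber on which $\hat v$ vanishes.
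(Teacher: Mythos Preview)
The paper's argument is much shorter and takes a different route: it shows directly that $h:=\partial v/\partial y$ is negative in $T$ by a maximum--principle style argument. Since $-\Delta h=\mu h$ in $T$ and $h=0$ on $\gamma_1\cup\gamma_3$, any component $D$ of $\{h>0\}$ would produce an admissible competitor (namely $h^+$) for \eqref{def_mu} with Rayleigh quotient equal to $\mu$ but vanishing on an open subset of $T$, which contradicts the simplicity of the first eigenvalue. No symmetrization is used.

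Your approach via Steiner symmetrization has a genuine gap. The P\'olya--Szeg\H{o} inequality for Steiner symmetrization is a theorem about $H^1_0$ functions; it can fail for functions that do not vanish on the whole boundary, even if they are even in the symmetrization variable and the domain is Steiner--symmetric. Concretely, on the triangle $\hat T=\{0<x<1,\ |y|<x\}$ take $\hat v(x,y)=x^2+y^2$ (even in $y$, positive, smooth). Its fibrewise symmetric decreasing rearrangement is $\hat v^{*}(x,y)=x^2+(x-|y|)^2$, and a direct computation gives
\[
\int_{\hat T}|D\hat v^{*}|^{2}=\frac{16}{3}\ >\ \frac{8}{3}=\int_{\hat T}|D\hat v|^{2}.
\]
Thus the inequality you invoke to conclude that $\hat v^{*}$ is again a minimizer---and hence $\hat v^{*}=\hat v$---is simply false in this mixed setting, and your chain of implications breaks. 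The trouble is exactly the $x$--derivative: because the slanted Neumann sides make the fibre length $2x\tan\alpha$ vary with $x$, pushing mass towards $\{y=0\}$ can create large $\partial_x$--gradients near the apex. Your last paragraph asserts that ``the Neumann portion of $\partial\hat T$ plays no role'', but the counterexample shows that it does.

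If you wish to salvage the symmetrization idea, one clean fix in the case $\alpha=\pi/N$ is to work on the full polygon $P_N^r$: there the eigenfunction lies in $H^1_0(P_N^r)$, the polygon is convex and symmetric about the $x$--axis, so the classical Steiner P\'olya--Szeg\H{o} applies and yields $u_y\le 0$ for $y>0$, hence $v_y\le 0$ in $T$. For general $\alpha\in(0,\pi/2)$, however, no such ambient $H^1_0$ problem is available, and the paper's maximum--principle argument on $h=v_y$ is the appropriate tool.
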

\begin{proof}
We can assume that $v$ is positive, if not we can consider $-v$. The Lemma is an immediate consequence of the fact that the function $h=\frac{\partial v}{\partial y}$ is everywhere negative inside $T$. To prove that the sign of $h$ is constant and non positive we observe that $-\Delta h=\mu h$ in $T$ and $h\le 0$ on $\partial T$. If by contradiction there exists an open set $D\subset T$ such that $h> 0$ on $D$ and $h=0$ on $\partial D$ then we would have $$\mu=\frac{\|Dh\|^2_{L^2(D)}}{\|h\|^2_{L^2(D)}}> \min\left\{\frac{\|D v\|^2_{L^2(T)}}{\|v\|^2_{L^2(T)}}:\,v\in H^1(T),\, v\not\equiv0,\, v=0 \;\text{on $\gamma_1$} \right\}=\mu,$$
which yields a contradiction. Once we know that $h$ is non positive in $T$ the by classical elliptic estimates it is strictly negative inside.
\end{proof}
%Let $v>0$ be a solution to \eqref{mixed}. Then $v$ is a $log$-concave function. In particular the maximum of $v$ is in the origin, the level set $\{v=t\}$ for $t\in(0,\max u)$ is a convex arc of curve touching at the extremal points $\gamma_2$ and $\gamma_3$ orthogonally. 

\begin{lemma}\label{lem_main}
For any given positive $ r$ and for all $\alpha\in(0,\pi/2)$, we have
\[
\frac{\partial \mu}{\partial \alpha}\ge \mu\tan\alpha-\left(\mu-\frac{j_0^2}{ r^2 \cos^2\alpha}\right)\frac{1}{\tan\alpha}.
\]
\end{lemma}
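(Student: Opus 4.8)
\emph{Plan.} The idea is to feed the shape-derivative identity of Lemma~\ref{lem_diff} two one-dimensional estimates along the hypotenuse $\gamma_2$: a Bessel-type Poincar\'e inequality, coming from the radial Faber--Krahn inequality on a disk, which controls the gradient term; and the pointwise comparison of Lemma~\ref{concavity}, which converts an $L^2$ norm on $\gamma_2$ into an $L^2$ norm on all of $T$. Concretely, I would first parametrize $\gamma_2$ by arclength $s\in(0,r)$ measured from $O$, so that $s=0$ is the corner $O$ and $s=r$ is the acute vertex of $T$ that also lies on the Dirichlet side $\gamma_1$, and write $g(s)$ for the restriction of $v$ to $\gamma_2$. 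Then $g(r)=0$, and since $\partial v/\partial\nu=0$ on $\gamma_2$ the gradient of $v$ is tangential there, so $|Dv|^2=g'(s)^2$ on $\gamma_2$; by elliptic regularity (the two corners of $\gamma_2$ have opening $<\pi/2$) $g$ is admissible in what follows. The radial function $x\mapsto g(|x|)$ then belongs to $H^1_0(B_r)$, $B_r$ the disk of radius $r$ centered at $O$, so the variational characterization of $\lambda(B_r)=j_0^2/r^2$ yields
\[
\int_0^r g'(s)^2\,s\,ds\ \ge\ \frac{j_0^2}{r^2}\int_0^r g(s)^2\,s\,ds ,
\]
whence the numerator in the formula of Lemma~\ref{lem_diff} is bounded below by $\big(j_0^2/r^2-\mu\big)\int_0^r g(s)^2\,s\,ds$. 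Next, writing $\int_T v^2$ as an iterated integral over the vertical slices $\{x=\mathrm{const}\}$, on which $y$ runs over $(0,x\tan\alpha)$, applying Lemma~\ref{concavity} on each slice, and substituting $x=s\cos\alpha$, one gets $\int_T v^2\ge\sin\alpha\cos\alpha\int_0^r g(s)^2\,s\,ds$.

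To combine these with the right sign I would use that $\mu\ge j_0^2/r^2$: indeed $T(\alpha,r)$ sits inside the circular sector of radius $r$ and opening $\alpha$ with vertex at $O$, with $\gamma_1$ in the interior of that sector and $\gamma_2,\gamma_3$ on its two straight edges, so extending $v$ by zero outside $T$ produces an admissible competitor for the mixed eigenvalue of the sector (Dirichlet on the arc, Neumann on the radii), which equals $j_0^2/r^2$ with radial eigenfunction $J_0(j_0|x|/r)$. Since $j_0^2/r^2-\mu\le0$, the estimate $\frac{\int_0^r g^2 s\,ds}{\int_T v^2}<\frac{1}{\sin\alpha\cos\alpha}$ may be inserted with the inequality pointing the right way, and Lemma~\ref{lem_diff} gives
\[
\frac{\partial\mu}{\partial\alpha}\ \ge\ \Big(\frac{j_0^2}{r^2}-\mu\Big)\frac{1}{\sin\alpha\cos\alpha}+2\mu\tan\alpha .
\]
An elementary trigonometric rearrangement using $\frac{1}{\sin\alpha\cos\alpha}=\tan\alpha+\frac{1}{\tan\alpha}$ and $\frac{1}{\cos^2\alpha}=1+\tan^2\alpha$ then turns the right-hand side into $\mu\tan\alpha-\big(\mu-\frac{j_0^2}{r^2\cos^2\alpha}\big)\frac{1}{\tan\alpha}$, which is the assertion.

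The computational parts — the slicing, the change of variables $x=s\cos\alpha$, and the final trigonometry — are routine once the earlier lemmas are in hand. The two points that need care are, first, checking that $g=v|_{\gamma_2}$ is regular enough (elliptic regularity up to the Neumann edge and through the two corners) for the weighted Poincar\'e inequality to be legitimate and to be read as the radial Faber--Krahn inequality; and second, getting the direction of the inequalities right, since the whole estimate hinges on $\mu\ge j_0^2/r^2$, which is exactly why Lemma~\ref{concavity} must be used to bound $\int_T v^2$ from below rather than from above. I expect the latter sign bookkeeping, rather than any single hard step, to be where the argument is most easily mishandled.
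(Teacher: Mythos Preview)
Your proposal is correct and follows essentially the same route as the paper: restrict $v$ to the hypotenuse, use the one-dimensional weighted Rayleigh quotient for $j_0^2/r^2$ to bound the gradient integral, invoke the sector comparison to get $\mu\ge j_0^2/r^2$ and fix the sign, then apply Lemma~\ref{concavity} slice by slice to replace $\int_0^r g^2\,s\,ds$ by $\frac{1}{\sin\alpha\cos\alpha}\int_T v^2$; the final trigonometric rewriting is identical. Your added remarks on regularity of $g$ at the corners and on the sign bookkeeping are more explicit than the paper's presentation, but the argument is the same.
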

\begin{proof}
Let $v$ be a solution to \eqref{mixed} and for $s\in[0, r]$ we set $g(s)=v(s \cos\alpha,s \sin\alpha)$. Since $\frac{\partial v}{\partial\nu}=0$ on $\gamma_2$ then $|Dv(s \cos\alpha,s \sin\alpha)|=-g'(s)$. As we said before $\displaystyle\frac{j_0^2}{r^2}$ is first Dirichlet Laplacian eigenvalue on the disk $B_r$ of radius $r$. This means that 
\[
\frac{j_0^2}{r^2}=\min\left\{\frac{\|D u\|^2_{L^2(B_r)}}{\|u\|^2_{L^2(B_r)}}:\,u\in H^1_0(B_r),\, u\not\equiv0 \right\}.
\]
Since first eigenfunctions are radial (they have the same symmetry as the domain $B_r$), then the minimum can be taken on radial functions
\begin{equation}\label{eigenball}
\frac{j_0^2}{r^2}=\min\left\{\frac{\int_{0}^{ r} |u'(s)|^2 s\ ds}{\int_{0}^{ r} u^2(s) s\ ds}:\,u\in H^1(0,r),\, u\not\equiv0, \, u(r)=0 \right\}.
\end{equation}

Moreover, using again the radial symmetry,
\begin{equation}\label{eigensector}
\frac{j_0^2}{r^2}=\min\left\{\frac{\|D u\|^2_{L^2(C_{\alpha r})}}{\|u\|^2_{L^2(C_{\alpha r})}}:\,u\in H^1(C_{\alpha r}),\, u\not\equiv0, u(x,y)=0 \mbox{ if $x^2+y^2=r^2$} \right\},
\end{equation}
where $C_{\alpha r}$ is a circular sector of radius $r$ and opening angle $\alpha$, center in the origin and containing the triangle $T(\alpha,r)$. By standard arguments any solution to \eqref{mixed} (extended to zero outside $T$) is an admissible function to be used in \eqref{eigensector}, and we have $\displaystyle\frac{j_0^2}{r^2}<\mu$.

By Lemma \ref{lem_diff}, the characterization \eqref{eigenball} and Lemma \ref{concavity}, we have
\begin{gather*}
\begin{split}
\frac{\partial \mu}{\partial \alpha}&=2\mu\tan\alpha+\frac{\displaystyle\int_{0}^{ r} \Big(|g'(s)|^2-\mu g^2(s)\Big) s\ ds}{\displaystyle\int_{T} v^2}\\
&\ge2\mu\tan\alpha-\left(\mu-\frac{j_0}{ r^2}\right)\frac{\displaystyle\int_{0}^{ r} g^2(s) s\ ds}{\displaystyle\int_{T} v^2}\\
&=2\mu\tan\alpha-\left(\mu-\frac{j_0}{ r^2}\right)\frac{\displaystyle\int_{0}^{ r\cos\alpha} v^2(x,x\tan\alpha) \frac{x}{\cos^2\alpha}\ dx}{\displaystyle\int_{T} v^2}\\
&\ge 2\mu\tan\alpha-\left(\mu-\frac{j_0}{ r^2}\right)\frac{\displaystyle\int_{0}^{ r\cos\alpha} \left(\int_{0}^{ r\sin\alpha} v^2(x,y)\frac{1}{\sin\alpha\cos\alpha}\ dy\right)dx }{\displaystyle\int_{T} v^2}\\
&= 2\mu\tan\alpha- \left(\mu-\frac{j_0}{ r^2}\right)\frac{1}{\sin\alpha\ \cos\alpha}
\end{split}
\end{gather*}
and the proof is completed.
\end{proof}
 
Now, since $\frac{j_0^2}{ r^2\cos^2\alpha}=\lambda(B_{ r\cos\alpha})$, and $B_{ r\cos\alpha}\subset P^ r_N$ then
\[
\frac{j_0^2}{ r^2\cos^2\alpha}> \mu.
\] 
Therefore by Lemma \ref{lem_main}
\[
\frac{\partial \mu}{\partial \alpha}> \mu\tan\alpha
\]
and integrating in $\alpha$ from $\frac{\pi}{N+1}$ to $\frac{\pi}{N}$ we get Theorem \ref{teo_ref}.

At this point we can prove Theorem \ref{teo_best}. Using Lemma \ref{lem_main} in fact we have
\[
\frac {\partial}{\partial\alpha}\Big( r^2 \sin\alpha\cos\alpha \,\mu(T(\alpha, r))\Big)\ge j_0^2,
\]
which, integrated with respect to $\alpha$, from $\frac{\pi}{N+1}$ to $\frac{\pi}{N}$ gives
\[
 r^2 \sin \frac{\pi}{N}\cos\frac{\pi}{N} \,\lambda(P^ r_N)  -  r^2 \sin \frac{\pi}{N+1}\cos\frac{\pi}{N+1} \,\lambda(P^ r_{N+1} )  
\ge \frac{\pi j_0^2}{N(N+1)}.
\]
\vspace{0.4in}

\centerline{\bf Acknowledgements}
This work has been partially supported by Progetto Gnampa 2013 ``Disuguaglianze funzionali e problemi sovradeterminati'' and was completed when the Author was Humboldt Fellow at the Mathematisches Institut of the University of Cologne.% supported by the Alexander Von Humboldt Foundation as .%with the Homboldt Fellowship for experienced Researcher.. 


\begin{thebibliography}{99}

%%OK
\bibitem {AF}
\newblock P. Antunes, P. Freitas,
\newblock\emph{New bounds for the principal Dirichlet eigenvalue of planar regions},
\newblock Experiment. Math. \textbf{15} (2006), 333--342. \MR{2264470} 

%%OK
\bibitem {AC}
\newblock M. S. Ashbaugh, F. Chiacchio,
\newblock\emph{On low eigenvalues of the Laplacian with mixed boundary conditions},
\newblock J. Differential Equations \textbf{250} (2011), 2544--2566. \MR{2756075}

%%OK
\bibitem{AS} 
\newblock M. Abramowitz, I. Stegun,
\newblock\emph{A. Handbook of mathematical functions with formulas, graphs, and mathematical tables.} 
\newblock National Bureau of Standards Applied Mathematics Series, 1964. \MR{0167642}

%%OK
\bibitem{BNT} 
\newblock B. Brandolini, C. Nitsch, C. Trombetti, 
\newblock\emph{An upper bound for nonlinear eigenvalues on convex domains by means of the isoperimetric deficit},
\newblock Arch. Math. (Basel) \textbf{94} (2010), no. 4, 391--400. \MR{2643973} 

%%OK
\bibitem{CK} 
\newblock L.M.  Cureton, J.R. Kuttler,
\newblock\emph{Eigenvalues of the Laplacian on regular polygons and polygons resulting from their dissection},
\newblock J. Sound Vibration \textbf{220} (1999), 83--98. \MR{1672258} 

%%OK
\bibitem{DP1}
\newblock F. Della Pietra, N. Gavitone, 
\newblock\emph{Sharp bounds for the first eigenvalue and the torsional rigidity related to some anisotropic operators}, 
\newblock Mathematischen Nachrichten \textbf{287} (2014), 194-209.

%%OK
\bibitem{DP2}
\newblock F. Della Pietra, N. Gavitone, 
\newblock\emph{Upper bounds for the eigenvalues of Hessian equations}, 
\newblock Annali di Matematica Pura e Applicata, (in press) DOI:10.1007/s10231-012-0307-5.

%%OK
\bibitem{Fre} 
\newblock P. Freitas, 
\newblock\emph{Precise bounds and asymptotics for the first Dirichlet eigenvalue of triangles and rhomb},
\newblock J. Funct. Anal. 	textbf{251} (2007), no. 1, 376--398. \MR{2353712} 

%%OK
\bibitem{GS} 
\newblock P. Grinfeld, G. Strang,
\newblock\emph{The Laplacian eigenvalues of a polygon},
\newblock Comput. Math. Appl. \textbf{48} (2004), 1121--1133. \MR{2107387} 

%%OK
\bibitem{Hen} %(MR2251558)
\newblock A. Henrot,
\newblock\emph{Extremum problems for eigenvalues of elliptic operators},
\newblock Frontiers in Mathematics. BirkhŠuser Verlag, Basel, 2006. \MR{2251558}

%%OK
\bibitem{HP} %(MR2512810)
\newblock A. Henrot, M. Pierre,
\newblock\emph{Variation et optimisation de formes. Une analyse g\'eom\'etrique.}
\newblock Math\'ematiques \& Applications, vol. 48, Springer, 2005. \MR{2512810}

%%OK
\bibitem{Ke} %(MR2238193) 
\newblock S. Kesavan, 
\newblock\emph{Symmetrization \& applications.}
\newblock Series in Analysis, 3. World Scientific Publishing Co. Pte. Ltd., Hackensack, NJ, 2006. \MR{2238193}

%\bibitem{Mel} %(MR1168980)
%\newblock A.~D. Melas,
%\newblock\emph{The stability of some eigenvalue estimates}, 
%\newblock J. Differential Geom. \textbf{36} (1992), no. 1, 19Ð33. \MR{1168980}

%%OK
\bibitem{Ma}
\newblock E. Makai,
\newblock\emph{On the Principal Frequency of a Membrane and the Torsional Rigidity of a Beam},
\newblock Studies in Mathematical Analysis and Related Topics: Essays in Honor of George P—lya, p. 227--231. Stanford: Stanford Univ. Press, 1962. \MR{0167004}

%%OK
\bibitem{Ni} 
\newblock C. Nitsch, 
\newblock\emph{An isoperimetric result for the fundamental frequency via domain derivative.},
\newblock Calc. Var. Partial Differential Equations \textbf{49} (2014), no. 1-2, 323--335.  \MR{3148119} 

%%OK
\bibitem{NP} %MR1930612 (2003g:49061) 
\newblock A. Novruzi, M. Pierre,
\newblock\emph{Structure of shape derivatives},
\newblock J. Evol. Equ. 2 (2002), no. 3, 365--382. \MR{1930612}

%%OK
\bibitem {PW} %(MR0149735) 
\newblock L.~E. Payne, H.~F. Weinberger,
\newblock\emph{Some isoperimetric inequalities for membrane frequencies and torsional rigidity},
\newblock J. Math. Anal. Appl. \textbf{2} (1961), 210--216. \MR{0149735} 

%%OK
\bibitem {PS}
\newblock G. P\'olya and G. Szeg\"o,
\newblock\emph{Isoperimetric inequalities in mathematical physics},
\newblock Annals of Mathematical Studies 27, Princeton University Press, Princeton, 1951. \MR{0149735} 

%%OK
\bibitem{Po}
\newblock G. P\'olya,
\newblock\emph{Two More Inequalities between Physical and Geometrical Quantities},
\newblock J. Indian Math. Soc. (N.S.) \textbf{24} (1960), 413--419. \MR{0133059} 

%%OK
\bibitem{Pr}
\newblock M.H. Protter,
\newblock\emph{A lower bound for the fundamental frequency of a convex region},
\newblock Proc. Amer. Math. Soc. \textbf{81} (1981), 65--70. \MR{0589137} 

%%OK
\bibitem{So}
\newblock A. Yu. Solynin
\newblock\emph{Isoperimetric inequalities for polygons and dissymetrization},
\newblock Algebra i Analiz 4 (1992), 210--234; translation in St. Petersburg Math. J. \textbf{4} (1993), 377--396.\MR{1182401} 

%%OK
\bibitem{SZ} %(MR1215733)
\newblock J. Sokolowski and J.~P. Zol\'esio, 
\newblock\emph{Introduction to shape optimization. Shape sensitivity analysis.}
\newblock  Springer Series in Computational Mathematics, vol. 16. Springer-Verlag, Berlin, 1992. \MR{1215733}
\end{thebibliography}
\end{document}